\documentclass[10pt,a4paper]{amsart}
\usepackage[font=small]{caption}

\usepackage{bbm,amsmath,amssymb,amsfonts,graphicx,color,subfig,hyperref,diagbox,booktabs,float}
\restylefloat{table}

\DeclareMathAlphabet\EuScript{U}{eus}{m}{n}
\SetMathAlphabet\EuScript{bold}{U}{eus}{b}{n}

\DeclareSymbolFont{rsfs}{U}{rsfs}{m}{n}
\DeclareSymbolFontAlphabet{\mathrsfs}{rsfs}

\theoremstyle{plain}

\newtheorem{theorem}{Theorem}
\newtheorem{lemma}[theorem]{Lemma}

\newtheorem{conjecture}{Conjecture}

\newcommand{\R}{{\mathbb{R}}}
\newcommand{\N}{{\mathbb{N}}}
\newcommand{\C}{{\mathbb{C}}}
\newcommand{\Cay}{\operatorname{Cay}}

\newcommand{\Tr}{\operatorname{Tr}}
\newcommand{\id}{1}

\newcommand{\psd}{\succeq}

\newcommand{\hatGamma}{\hat\Gamma}

\DeclareMathOperator{\U}{U}

\title{Fourier analysis on finite groups and\\ the Lov\'asz theta-number of Cayley graphs}

\author{Evan DeCorte} 
\address{P.E.B.~DeCorte, Delft Institute of Applied Mathematics, Delft
  University of Technology, P.O. Box 5031, 2600 GA Delft, The Netherlands}
\email{p.e.b.decorte@tudelft.nl}

\author{David de Laat}

\address{D.~de Laat, Delft Institute of Applied Mathematics, Delft
  University of Technology, P.O. Box 5031, 2600 GA Delft, The
  Netherlands} 
\email{mail@daviddelaat.nl}

\author{Frank Vallentin} 
\address{F.~Vallentin, Mathematisches Institut, Universit\"at zu
  K\"oln, Weyertal 86--90, 50931 K\"oln, Germany}
\email{frank.vallentin@uni-koeln.de}

\thanks{The authors were supported by Vidi grant
  639.032.917 from the Netherlands Organization for Scientific
  Research (NWO)}

\subjclass{05A05, 20C30, 90C22}

\keywords{Lov\'asz theta number, Cayley graphs of finite groups,
  independent sets, Erd\H{o}s-Ko-Rado theorems, intersecting families
  of permutations (and its $q$-analog), finite Fourier analysis}

\date{July 21, 2013}

\begin{document}

\maketitle

\markboth{P.E.B.~DeCorte, D.~de Laat, and
F.~Vallentin}{Fourier analysis on finite groups and the Lov\'asz $\vartheta$-number of Cayley graph}

\begin{abstract}
  We apply Fourier analysis on finite groups to obtain simplified
  formulations for the Lov\'asz $\vartheta$-number of a Cayley graph.
  We put these formulations to use by checking a few cases of a
  conjecture of Ellis, Friedgut, and Pilpel made in a recent article
  proving a version of the Erd\H{o}s-Ko-Rado theorem for
  $k$-intersecting families of permutations.  We also introduce a
  $q$-analog of the notion of $k$-intersecting families of
  permutations, and we verify a few cases of the corresponding Erd\H
  os-Ko-Rado assertion by computer.
\end{abstract}

\section{Introduction}

One approach to some problems in extremal combinatorics involves
estimating the independence number of a Cayley graph. A classic
example is upper bounding sizes of error-correcting codes in Abelian
groups. A recent, exciting example is provided by a version of the
Erd\H{o}s-Ko-Rado theorem for permutations proven by Ellis, Friedgut,
and Pilpel \cite{ellis11}: If $k$ is a positive integer, $n$ is
sufficiently large depending on $k$, and $\mathcal{A}$ is a largest set of
permutations on $n$ letters such that any two agree on at least $k$
letters, then $|\mathcal{A}| = (n-k)!$. This resolved a conjecture of Frankl and
Deza from \cite{FranklDeza} stated in 1977.

The Lov\'asz $\vartheta$-number, introduced in \cite{lovasz79},
provides an upper bound on the size of an independent set in a general
graph. It can be computed by solving a semidefinite program involving
$n \times n$-matrices, where $n$ is the cardinality of the vertex
set. We specialize the $\vartheta$-number to Cayley graphs and show
how the semidefinite program block-diagonalizes to a simpler one
involving smaller matrices associated to the irreducible
representations of the group. The resulting semidefinite program can
be thought of as a ``frequency domain'' formulation of the
$\vartheta$-number. Furthermore, under a sufficient condition on the
graph, our semidefinite program collapses to a linear program which
can be formulated using only knowledge of the group characters. This
condition applies, in particular, for the two examples given above. In
fact, one can interpret the arguments in \cite{ellis11} as
constructing feasible solutions to the linear program computing the
$\vartheta$-number for a particular Cayley graph on the symmetric
group.

In \cite{ellis11}, the problem of quantifying the dependence of $n$ on
$k$ is left open, but they conjecture that the conclusion of their
theorem holds when $n \geq 2k+1$. By explicit computations we verify
their conjecture for some small values of $n$ and $k$, and we identify
some values for which the $\vartheta$-number does not give a tight
enough bound to verify the conjecture, suggesting that other methods
will be required to resolve these cases.

The outline of the paper is as follows: In Section~\ref{sec:defs} we
fix notation and definitions, and recall some basic facts from finite
Fourier analysis. In Section~\ref{sec:cayley} we find several
reformulations of the Lov\'{a}sz $\vartheta$-function for Cayley
graphs by using the group structure on the vertex set. In
Section~\ref{sec:efp} we apply these results in the context of the
Ellis-Friedgut-Pilpel conjecture made in \cite{ellis11}, and
in~Section \ref{sec:qanalog} we introduce a $q$-analog of their result
as a conjecture, and perform the analogous computations.
In~Section~\ref{sec:blowup}, we show how the machinery developed in
Section~\ref{sec:cayley} could also be applied to vertex-transitive
graphs.

\section{Definitions, notation, and background in Fourier analysis}
\label{sec:defs}

All graphs will be simple and undirected. For any graph $G = (V,E)$,
the \emph{independence number} is the maximum number of pairwise
nonadjacent vertices; this maximum will be denoted $\alpha(G)$.

Suppose $\Gamma$ is a finite group. A subset $X \subseteq \Gamma$ will
be called a \emph{connection set} if the unit element $e$ of $\Gamma$
does not belong to $X$, and if $X$ is inverse-closed; that is $x^{-1}
\in X$ whenever $x \in X$.  For any connection set $X \subseteq
\Gamma$, the \emph{Cayley graph} $\Cay(\Gamma, X)$ is the graph with
vertex set $\Gamma$, where two vertices $x$ and $y$ are adjacent if
and only if $y^{-1}x \in X$. The defining conditions of a connection
set imply that $\Cay(\Gamma,X)$ is an undirected graph without
self-loops. Notice that we do not require $X$ to generate $\Gamma$;
therefore $\Cay(\Gamma, X)$ need not be connected.

In the following we recall some basic facts from representation theory
of finite groups. For a good reference, see for instance
Terras~\cite{terras99}.  A (finite-dimensional) \emph{unitary
  representation} of $\Gamma$ is a group homomorphism $\pi \colon
\Gamma \to \U(d_\pi)$ where $\U(d_\pi)$ is the group of unitary $d_\pi
\times d_\pi$ matrices. The number~$d_\pi$ is called the \emph{degree}
of~$\pi$. The \emph{character} of $\pi$ is defined as
$\chi_\pi(\gamma) = \Tr(\pi(\gamma))$, where $\Tr$ denotes trace. A
subspace $M$ of $\C^{d_\pi}$ is \emph{$\pi$-invariant} if $\pi(\gamma)
m \in M$ for all $\gamma \in \Gamma$ and $m \in M$. The unitary
representation $\pi$ is said to be \emph{irreducible} if $\{0\}$ and
$\C^{d_\pi}$ are the only $\pi$-invariant subspaces of
$\C^{d_\pi}$. Two unitary representations $\pi$ and $\pi'$ are
(unitarily) \emph{equivalent} if there is a unitary matrix $T$ such
that $T\pi(\gamma) = \pi'(\gamma)T$ for all $\gamma \in \Gamma$.

Given two inequivalent irreducible unitary representations $\pi$ and
$\pi'$, the \emph{Schur orthogonality relations} give us the following
two facts:
\begin{enumerate}
\item $\sum_{\gamma \in \Gamma} \pi_{ij}(\gamma)
  \overline{\pi'_{lk}(\gamma)} = 0$, where $\pi_{ij}(\gamma)$ is the
  $ij$-entry of the matrix $\pi(\gamma)$, and $\pi'_{lk}(\gamma)$ is
  defined analogously;
\item $\sum_{\gamma \in \Gamma} \pi_{ij}(\gamma)
  \overline{\pi_{lk}(\gamma)} = \frac{|\Gamma|}{d_\pi} \delta_{il}
  \delta_{jk}$, where $\delta$ is the Kronecker delta.
\end{enumerate}

These relations are implied by \emph{Schur's lemma}, which says that
if $\pi$ and $\pi'$ are irreducible unitary representations, and if
$T$ is a matrix for which $T \pi(\gamma) = \pi'(\gamma) T$ for all
$\gamma \in \Gamma$, then $T$ is either invertible or zero; if $\pi =
\pi'$, then $T$ is a scalar multiple of the identity matrix.

We fix a set of mutually inequivalent irreducible unitary
representations of~$\Gamma$, so that each unitary equivalence class
has a representative; call this set $\hatGamma$. This allows us to
define the \emph{Fourier transform} of a function $f \colon \Gamma \to
\C$:
\[
\hat f(\pi) = \sum_{\gamma \in \Gamma} f(\gamma) \pi(\gamma), 
\]
where $\hat f(\pi)$ is a complex $d_\pi \times d_\pi$ matrix.  The
\emph{Fourier inversion formula} says we can recover $f$ from its
Fourier transform:
\[
f(\gamma) = \frac{1}{|\Gamma|} \sum_{\pi \in \hatGamma} d_\pi \langle \hat f(\pi), \pi(\gamma) \rangle.
\]
The inner product used here is the trace inner product, defined as
$\langle A, B \rangle = \Tr(B^*A)$ for square complex matrices $A$ and
$B$ of the same dimension, where $B^*$ denotes the conjugate-transpose
of $B$.

The \emph{convolution} of two functions $f \colon \Gamma \to \C$ and
$g \colon \Gamma \to \C$ is defined by
\[
f * g(\gamma) = \sum_{\beta \in \Gamma} f(\beta) g(\beta^{-1} \gamma),
\]
and the \emph{involution} of $f$ is defined as $f^*(\gamma) =
\overline{f(\gamma^{-1})}$.  It is a fact that $\widehat{f*g}(\pi) =
\hat f(\pi) \hat g(\pi)$, and that $\widehat{f^*}(\pi) = \hat
f(\pi)^*$.

A function $f \colon \Gamma \to \C$ is of \emph{positive type} if
\[
\sum_{\gamma \in \Gamma} g*g^*(\gamma) f(\gamma) \geq 0
\]
for all functions $g \colon \Gamma \to \C$; that is, the sum is a
nonnegative real number.  We denote by $\mathcal{P}(\Gamma)$ the set
of functions on $\Gamma$ of positive type.  Notice that $f \in
\mathcal{P}(\Gamma)$ if and only if $\bar{f} \in \mathcal{P}(\Gamma)$,
where $\bar{f}$ is the pointwise complex-conjugate of $f$. One fact
that will be needed later is that $f(\gamma^{-1}) =
\overline{f(\gamma)}$ for all $\gamma \in \Gamma$ when $f$ is of
positive type. For a proof of this fact and more information on
functions of positive type, see Folland \cite[Chapter 3.3]{folland95}.

For vectors $u,v \in \C^n$, we use $\langle u, v \rangle$ to denote
the usual inner product of $u$ and $v$.  An $n \times n$ matrix $A$
with entries from $\C$ will be called \emph{positive semidefinite} if
$\langle Av, v \rangle$ is a nonnegative real number for all $v \in
\C^n$. Using the polarization identity, it is possible to prove that
every positive semidefinite matrix is Hermitian.  For each finite set
$V$, the set of positive semidefinite matrices with rows and columns
indexed on $V$ will be denoted $\mathcal{S}_{\psd 0}^V$. When $V =
\{1, \ldots, n\}$, we will use the notation $\mathcal{S}_{\psd 0}^n$
instead.  It is a fact that $A \in \mathcal{S}^n_{\psd 0}$ if and only
if $\langle A, B \rangle \geq 0$ for all $B \in \mathcal{S}^n_{\psd
  0}$; this fact is known as the \emph{self-duality} of
$\mathcal{S}^n_{\psd 0}$.

The following theorem is an application of self-duality, as well as
\emph{Parseval's identity}, which says that
\[
 \sum_{\gamma \in \Gamma} f(\gamma) \overline{g(\gamma)} = \frac{1}{|\Gamma|} \sum_{\pi \in \hatGamma} d_\pi \langle \hat f(\pi), \hat g(\pi) \rangle
\]
for all functions $f$ and $g$ on $\Gamma$:

\begin{theorem}[Bochner's theorem for finite groups]
\label{thm:bochner}
Suppose $\Gamma$ is a finite group and let $f \colon \Gamma \to
\C$. Then $f$ is of positive type if and only if $\hat f(\pi)$ is
positive semidefinite for each $\pi \in \hatGamma$.
\end{theorem}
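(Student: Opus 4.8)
The plan is to package the whole argument into one Fourier-domain identity for the quadratic form appearing in the definition of positive type, and then read off both directions of the equivalence from it, using only Parseval's identity, the Schur orthogonality relations, and the self-duality of the positive semidefinite cone. First I would record the elementary compatibilities of complex conjugation with convolution and involution, namely $\overline{f_1*f_2}=\bar f_1*\bar f_2$ and $\overline{g^*}=(\bar g)^*$, which give $\overline{g*g^*}=\bar g*(\bar g)^*$ and hence, together with $\widehat{f_1*f_2}(\pi)=\hat f_1(\pi)\hat f_2(\pi)$ and $\widehat{f^*}(\pi)=\hat f(\pi)^*$, the formula $\widehat{\,\overline{g*g^*}\,}(\pi)=\widehat{\bar g}(\pi)\widehat{\bar g}(\pi)^*$. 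Applying Parseval's identity to the pair $f$ and $h:=\overline{g*g^*}$ (so that $\overline{h(\gamma)}=g*g^*(\gamma)$) then yields the key identity
\[
\sum_{\gamma\in\Gamma} g*g^*(\gamma)\,f(\gamma)=\frac{1}{|\Gamma|}\sum_{\pi\in\hatGamma} d_\pi\,\big\langle\hat f(\pi),\,\widehat{\bar g}(\pi)\widehat{\bar g}(\pi)^*\big\rangle,
\]
valid for every $g\colon\Gamma\to\C$, in which each matrix $\widehat{\bar g}(\pi)\widehat{\bar g}(\pi)^*$ is positive semidefinite and each trace pairing turns out to be a real number.

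From this identity the ``if'' direction is immediate: if $\hat f(\pi)\psd 0$ for all $\pi\in\hatGamma$, then by self-duality of $\mathcal S^{d_\pi}_{\psd 0}$ each summand on the right is a nonnegative real, so the left-hand side is a nonnegative real for every $g$, i.e.\ $f\in\mathcal P(\Gamma)$.

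For the ``only if'' direction, suppose $f\in\mathcal P(\Gamma)$, fix $\pi_0\in\hatGamma$ and $c\in\C^{d_{\pi_0}}$, and aim to show $\langle\hat f(\pi_0)c,c\rangle\ge0$. Here I would engineer $g$ so that the right-hand side of the identity collapses onto $\pi_0$: take $\bar g(\gamma)=\sum_{k}c_k\,\overline{(\pi_0(\gamma))_{kl}}$ with $l$ a fixed column index. By the Schur orthogonality relations $\widehat{\bar g}(\pi)=0$ for every $\pi\ne\pi_0$, while $\widehat{\bar g}(\pi_0)$ is the rank-one matrix whose $l$-th column is $\tfrac{|\Gamma|}{d_{\pi_0}}c$ and whose other columns vanish, so $\widehat{\bar g}(\pi_0)\widehat{\bar g}(\pi_0)^*=\tfrac{|\Gamma|^2}{d_{\pi_0}^2}\,cc^*$. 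The identity then reduces to
\[
\sum_{\gamma\in\Gamma} g*g^*(\gamma)\,f(\gamma)=\frac{|\Gamma|}{d_{\pi_0}}\,\langle\hat f(\pi_0)c,c\rangle,
\]
whose left-hand side is a nonnegative real since $f$ is of positive type; as $c$ was arbitrary, $\hat f(\pi_0)\psd 0$.

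The main obstacle is the ``only if'' step, and specifically the observation that feeding a single column of matrix coefficients of $\pi_0$ into $g$ does exactly what is needed — it kills all the other irreducible blocks through Schur orthogonality and produces precisely a rank-one positive semidefinite matrix $cc^*$. Because such rank-one matrices generate $\mathcal S^{d_{\pi_0}}_{\psd 0}$, this is just enough, in combination with self-duality, to upgrade ``nonnegative diagonal'' to genuine positive semidefiniteness of $\hat f(\pi_0)$. The remaining work is routine: carefully tracking the complex conjugates through $\widehat{g*g^*}$, and checking that the trace pairings occurring in the identity are real, so that the phrase ``nonnegative real number'' used in the definitions of positive type and of positive semidefiniteness matches up on both sides.
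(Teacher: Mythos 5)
Your proof is correct and follows essentially the same route as the paper: Parseval's identity converts the positive-type quadratic form into a sum of trace pairings $\langle \hat f(\pi), \hat g(\pi)\hat g(\pi)^*\rangle$, and Schur orthogonality is used to build test functions whose Fourier transform is supported on a single irreducible block. The only differences are cosmetic: you conjugate $g$ so that the identity features $f$ rather than $\overline{f}$ (the paper instead relies on the remark that $f\in\mathcal P(\Gamma)$ iff $\bar f\in\mathcal P(\Gamma)$), and you use rank-one test matrices $cc^*$ where the paper takes an arbitrary positive semidefinite $A=BB^*$ via Cholesky.
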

\begin{proof}
  For any two complex-valued functions $f$ and $g$ on $\Gamma$, we
  have
  \begin{equation}\label{parsevaleq}
   \sum_{\gamma \in \Gamma} g*g^*(\gamma) \overline{f(\gamma)} =
   \frac{1}{|\Gamma|} \sum_{\pi \in \hatGamma} d_\pi \langle \widehat{g*g^*}(\pi), \hat f(\pi) \rangle =
   \frac{1}{|\Gamma|} \sum_{\pi \in \hatGamma} d_\pi \langle \hat g(\pi) \hat g(\pi)^*, \hat f(\pi) \rangle.
  \end{equation}
  The matrices $\hat g(\pi) \hat g(\pi)^*$ are always positive
  semidefinite, so \eqref{parsevaleq} is nonnegative if all the
  matrices $\hat f(\pi)$ are positive semidefinite. This gives one
  direction.
  
  For the other direction, suppose $f \colon \Gamma \to \C$ is of
  positive type, and fix $\pi \in \hatGamma$. Now let $A \in
  \mathcal{S}^{d_\pi}_{\psd 0}$ be arbitrary, and let $A = BB^*$ be
  the Cholesky decomposition. Define $g \colon \Gamma \to \C$ by
  $g(\gamma) = d_\pi / |\Gamma| \langle B, \pi(\gamma) \rangle$. By
  the Schur orthogonality relations (or uniqueness of Fourier
  coefficients), we have $\hat g(\pi) = B$ and $\hat g(\pi') = 0$ when
  $\pi'$ and $\pi$ are inequivalent, whence
  \[
    \hat g(\pi) \hat g(\pi)^* = BB^* = A \quad \text{and} \quad \hat g(\pi') \hat g(\pi')^* = 0.
  \]
  Now \eqref{parsevaleq}, which is nonnegative by hypotheses, is equal
  to $d_\pi/|\Gamma| \langle A, \hat f(\pi) \rangle$.  Since $\pi$ and
  $A$ were arbitrary, we conclude that $\langle A, \hat f(\pi) \rangle
  \geq 0$ for every $\pi$ and every $A \in \mathcal{S}^{d_\pi}_{\psd
    0}$.  Self-duality of $\mathcal{S}^{d_\pi}_{\psd 0}$ now implies
  $\hat f(\pi) \in \mathcal{S}^{d_\pi}_{\psd 0}$ for each $\pi \in
  \hatGamma$.
\end{proof}

\section{The $\vartheta$-number of a Cayley graph}
\label{sec:cayley}

Let $G = (V,E)$ be a finite graph. In \cite{lovasz79}, the Lov\'{a}sz
$\vartheta$-number $\vartheta(G)$ of $G$ is defined and a number of
equivalent formulations are given. The formulation of $\vartheta(G)$
which will be most important for us is:
\begin{align}\tag{A}\label{primal}
\vartheta(G) = \max \Big\{ \sum_{u, v \in V} A(u,v) : \; & A \in \mathcal{S}_{\psd 0}^V \text{ real-valued},\\[-3ex]
& \Tr(A) = 1, \; A(u, v) = 0 \text{ for } \{u,v\} \in E \Big\}.\nonumber
\end{align}
When $G$ is the Cayley graph $\Cay(\Gamma,X)$, the optimization over
matrices in \eqref{primal} can be replaced with optimization over
functions on $\Gamma$, as we proceed to show.

\begin{theorem}\label{ptFunctions}
Suppose $G=\Cay(\Gamma,X)$. Then 
\begin{align}
  \tag{B}\label{pt}
  \vartheta(G) = \max \Big\{ \sum_{\gamma \in \Gamma} f(\gamma) : \; & f \in \mathcal{P}(\Gamma) \text{ real-valued},\\[-3ex]
  & f(e) = 1, \; f(x) = 0 \text{ for } x \in X \Big\}.\nonumber
\end{align}
\end{theorem}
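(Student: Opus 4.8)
The plan is to prove the two formulations \eqref{primal} and \eqref{pt} are equal by showing each feasible solution of one can be turned into a feasible solution of the other with the same objective value. The key structural observation is that $\Cay(\Gamma,X)$ is vertex-transitive under the action of $\Gamma$ on itself by left multiplication, and that this action is by graph automorphisms. So I would like to symmetrize an optimal solution of \eqref{primal} without changing its objective value or feasibility, and then read off a function of positive type. Conversely, from a feasible $f$ I want to build a matrix $A$ indexed by $\Gamma$.

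\textbf{From \eqref{pt} to \eqref{primal}.}
Given a real-valued $f \in \mathcal{P}(\Gamma)$ with $f(e)=1$ and $f(x)=0$ for $x \in X$, define the matrix $A$ on $\Gamma \times \Gamma$ by $A(u,v) = \tfrac{1}{|\Gamma|} f(v^{-1}u)$. First I would check $A$ is positive semidefinite: for any $g \colon \Gamma \to \C$, the quadratic form $\sum_{u,v} A(u,v)\overline{g(u)}g(v)$ rearranges, after the substitution $\gamma = v^{-1}u$, into $\tfrac{1}{|\Gamma|}\sum_{\gamma} f(\gamma) \sum_v \overline{g(v\gamma)} g(v)$, and the inner sum is exactly $g * g^*$ evaluated appropriately (using $g^*(\beta) = \overline{g(\beta^{-1})}$), so nonnegativity follows from $f$ being of positive type. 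Real-valuedness is automatic, and Hermitianity of $A$ follows since $f(\gamma^{-1}) = \overline{f(\gamma)}$ for $f$ of positive type. Next, $\Tr(A) = \sum_{u} A(u,u) = |\Gamma| \cdot \tfrac{1}{|\Gamma|} f(e) = 1$. The edge condition: $\{u,v\} \in E$ means $v^{-1}u \in X$, so $A(u,v) = \tfrac{1}{|\Gamma|} f(v^{-1}u) = 0$. Finally the objective: $\sum_{u,v} A(u,v) = \tfrac{1}{|\Gamma|}\sum_{u,v} f(v^{-1}u) = \tfrac{1}{|\Gamma|} \cdot |\Gamma| \sum_{\gamma} f(\gamma) = \sum_{\gamma} f(\gamma)$. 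Hence $\vartheta(G) \geq \eqref{pt}$.

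\textbf{From \eqref{primal} to \eqref{pt}.}
This is the harder direction; here I expect the main obstacle to be verifying that the symmetrized matrix really has the form of a convolution-type kernel. Let $A$ be feasible for \eqref{primal}. For $\gamma \in \Gamma$ let $P_\gamma$ be the permutation matrix of left translation by $\gamma$, i.e.\ $(P_\gamma)_{u,v} = 1$ iff $u = \gamma v$; since left translation is a graph automorphism, $P_\gamma^{-1} A P_\gamma$ is again feasible with the same objective value (the objective $\sum_{u,v} A(u,v)$, the trace, positive semidefiniteness, and the edge-vanishing condition are all invariant). By convexity of the feasible set, the average $\bar A = \tfrac{1}{|\Gamma|}\sum_{\gamma} P_\gamma^{-1} A P_\gamma$ is feasible with the same objective value. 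A direct computation shows $\bar A(u,v)$ depends only on $v^{-1}u$: indeed $\bar A(u,v) = \tfrac{1}{|\Gamma|}\sum_\gamma A(\gamma u, \gamma v)$, and replacing $\gamma$ by $\gamma u^{-1}$ gives $\tfrac{1}{|\Gamma|}\sum_\gamma A(\gamma, \gamma v u^{-1})$, which depends only on $vu^{-1}$, equivalently on $v^{-1}u = (vu^{-1})^{-1}$. So there is a function $f \colon \Gamma \to \C$ with $\bar A(u,v) = \tfrac{1}{|\Gamma|} f(v^{-1}u)$; scaling is chosen so $f(e) = |\Gamma| \bar A(e,e) = \tfrac{|\Gamma|}{|\Gamma|}\sum_u A(u,u) = \Tr(A) = 1$ after noting $\bar A(e,e) = \tfrac1{|\Gamma|}\Tr(A)$. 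The function $f$ is real-valued since $\bar A$ is; it vanishes on $X$ since $\bar A$ vanishes on edges; and by reversing the quadratic-form computation of the previous paragraph, positive semidefiniteness of $\bar A$ gives $\sum_\gamma (g*g^*)(\gamma) f(\gamma) \geq 0$ for all $g$, i.e.\ $f \in \mathcal{P}(\Gamma)$. The objective matches as before. Therefore $\eqref{pt} \geq \vartheta(G)$, and combined with the reverse inequality the theorem follows.

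One technical point worth isolating is the precise identity matching $\sum_{u,v} A(u,v)\,\overline{g(u)} g(v)$ with $\sum_\gamma (g * g^*)(\gamma) f(\gamma)$; I would verify it once carefully using the definitions $f * g(\gamma) = \sum_{\beta} f(\beta) g(\beta^{-1}\gamma)$ and $g^*(\gamma) = \overline{g(\gamma^{-1})}$, and then cite it in both directions. Modulo that bookkeeping, the argument is the standard symmetrization plus the bijection between left-invariant kernels on $\Gamma \times \Gamma$ and functions on $\Gamma$.
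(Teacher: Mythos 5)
Your argument is essentially the paper's own proof: both directions proceed by averaging a feasible matrix for \eqref{primal} over the translation action of $\Gamma$ on itself, identifying the resulting translation-invariant kernel with a function $f$ on $\Gamma$, and transferring positive semidefiniteness to the positive-type condition via the quadratic-form/convolution identity that the paper isolates as Lemma~\ref{tensorConv}. The only difference is a convention choice (you average over left translations and use kernels of the form $f(v^{-1}u)$, while the paper averages over right translations and uses $f(uv^{-1})$), which does not change the substance of the argument.
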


Before we prove Theorem \ref{ptFunctions}, we require a lemma:

\begin{lemma}\label{tensorConv}
  Suppose $A \colon \Gamma \times \Gamma \to \C$ is a Hermitian matrix
  satisfying $A(\gamma,e) = A(\gamma \beta, \beta)$ for all
  $\gamma,\beta \in \Gamma$. Define $f \colon \Gamma \to \C$ by
  $f(\gamma) = A(\gamma,e)$. Then for any function $g \colon \Gamma
  \to \C$ we have
\[
\sum_{\gamma \in \Gamma} g*g^*(\gamma) f(\gamma) = \sum_{\gamma, \gamma' \in \Gamma} g(\gamma) \overline{g(\gamma')} A(\gamma, \gamma').
\]
\end{lemma}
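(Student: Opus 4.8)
The plan is to expand the left-hand side using the definitions of convolution and involution, perform a single change of summation variable, and then invoke the invariance hypothesis on $A$ to recognise the result as the right-hand side.

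First I would rewrite $g*g^*$. Since $g^*(\gamma) = \overline{g(\gamma^{-1})}$, the convolution formula gives $g*g^*(\gamma) = \sum_{\beta \in \Gamma} g(\beta)\, g^*(\beta^{-1}\gamma) = \sum_{\beta \in \Gamma} g(\beta)\,\overline{g(\gamma^{-1}\beta)}$. Substituting this into the left-hand side and interchanging the order of summation yields
\[
\sum_{\gamma \in \Gamma} g*g^*(\gamma)\, f(\gamma) = \sum_{\beta \in \Gamma}\sum_{\gamma \in \Gamma} g(\beta)\,\overline{g(\gamma^{-1}\beta)}\, f(\gamma).
\]
For each fixed $\beta$, I would then substitute $\gamma' := \gamma^{-1}\beta$; as $\gamma$ runs over $\Gamma$ so does $\gamma'$, the inverse substitution being $\gamma = \beta\gamma'^{-1}$. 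After renaming $\beta$ back to $\gamma$, the double sum becomes $\sum_{\gamma,\gamma' \in \Gamma} g(\gamma)\,\overline{g(\gamma')}\, f(\gamma\gamma'^{-1})$.

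It then remains to identify $f(\gamma\gamma'^{-1})$ with $A(\gamma,\gamma')$. By definition $f(\gamma\gamma'^{-1}) = A(\gamma\gamma'^{-1}, e)$, and applying the hypothesis $A(\mu, e) = A(\mu\beta, \beta)$ with $\mu = \gamma\gamma'^{-1}$ and $\beta = \gamma'$ gives $A(\gamma\gamma'^{-1}, e) = A(\gamma, \gamma')$, which is exactly what is needed to conclude.

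I do not expect a substantial obstacle here; the argument is essentially bookkeeping. The only points requiring a little care are getting the placement of the inverses right in the change of variables (it is easy to produce $\gamma'\gamma^{-1}$ or $\gamma^{-1}\gamma'$ in place of $\gamma\gamma'^{-1}$) and correctly unwinding the definition of the involution $g^*$. It is also worth noting that the Hermitian hypothesis on $A$ plays no role in the identity itself; it will instead be used where the lemma is applied, to guarantee that the associated function $f$ is real-valued and of positive type.
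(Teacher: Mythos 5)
Your computation is correct and is exactly the ``straightforward computation'' the paper omits: expanding $g*g^*$, substituting $\gamma' = \gamma^{-1}\beta$, and using the invariance hypothesis to identify $f(\gamma\gamma'^{-1})$ with $A(\gamma,\gamma')$. Your side remark that the Hermitian hypothesis is not needed for the identity itself is also accurate.
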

\begin{proof}
This follows from a straightforward computation.
\end{proof}

\begin{proof}[Proof of Theorem \ref{ptFunctions}]
  For one direction, let $A$ be a feasible solution for
  \eqref{primal}. Define $\bar A \colon \Gamma \times \Gamma \to \R$
  entrywise by
\[
\bar A(\gamma, \gamma') = \frac{1}{|\Gamma|} \sum_{\beta \in \Gamma}
A(\gamma \beta, \gamma' \beta).
\]
Being the average of matrices similar to $A$ (via permutation
matrices), the matrix $\bar A$ is positive semidefinite, and one now
easily checks that $\bar A$ is again a feasible solution for
\eqref{primal} having the same objective value as $A$. Moreover, we
have $\bar A(\gamma,e) = \bar A(\gamma \beta, \beta)$ for all
$\gamma,\beta \in \Gamma$.

Now define $f \colon \Gamma \to \R$ by $f(\gamma) = |\Gamma| \bar
A(\gamma,e)$. Then $\bar A$ and $f/|\Gamma|$ satisfy the hypotheses of
Lemma~\ref{tensorConv}, so
\[
\sum_{\gamma \in \Gamma} g*g^*(\gamma) f(\gamma)
= |\Gamma| \sum_{\gamma, \gamma' \in \Gamma} g(\gamma) \overline{g(\gamma')}
\bar A(\gamma, \gamma'),
\]
and since $\bar A$ is positive semidefinite, it follows that the
function $f$ is of positive type.  It is easily checked that the other
constraints of \eqref{pt} are satisfied by $f$, and moreover that the
objective values are equal:
\[
\sum_{\gamma \in \Gamma} f(\gamma) = |\Gamma| \sum_{\gamma \in \Gamma} \bar A(\gamma,e)
= \sum_{\gamma,\gamma' \in \Gamma} \bar A(\gamma,\gamma')
= \sum_{\gamma,\gamma' \in \Gamma} A(\gamma,\gamma').
\]

For the other direction, we begin with a feasible solution $f \colon
\Gamma \to \R$ to \eqref{pt}, and we define $A \colon \Gamma \times
\Gamma \to \R$ by $A(\beta,\gamma) = \frac{1}{|\Gamma|} f(\beta
\gamma^{-1})$.  Then $A$ is a feasible solution to \eqref{primal} by
Lemma \ref{tensorConv}, and its objective value is $\sum_{\gamma \in
  \Gamma} f(\gamma)$.
\end{proof}

Using Theorem \ref{thm:bochner}, we can also give a (complex)
semidefinite programming formulation of \eqref{pt} using block
matrices.
\begin{theorem}\label{comp}
Suppose $G=\Cay(\Gamma,X)$. Then
\begin{align}
\tag{C}\label{compFormulation}
\vartheta(G) = \max \Big\{ A_\id : \; & A_\pi \in \mathcal{S}_{\succeq 0}^{d_\pi} \text{ for each } \pi \in \hatGamma, \\[-0.3ex]
&\sum_{\pi \in \hatGamma} d_\pi \Tr(A_\pi) = |\Gamma|, \; \sum_{\pi \in \hatGamma} d_\pi \langle A_\pi, \pi(x) \rangle = 0 \text{ for } x \in X\Big\}\nonumber,
\end{align}
where $1 \in \hatGamma$ denotes the trivial representation.
\end{theorem}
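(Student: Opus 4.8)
The plan is to obtain \eqref{compFormulation} from \eqref{pt} by passing to the Fourier side, with Bochner's theorem (Theorem~\ref{thm:bochner}) converting the positive-type condition into positive semidefiniteness of the Fourier coefficients. First I would dispose of the word ``real-valued'' in \eqref{pt}: it can be dropped without changing the optimum. Indeed, if $f$ is complex-valued, of positive type, with $f(e)=1$ and $f(x)=0$ for all $x\in X$, then $\bar f$ has the same properties (it is of positive type because $f$ is, and $\bar f(e)=\overline{f(e)}=1$, $\bar f(x)=\overline{f(x)}=0$), so $\operatorname{Re}f=\tfrac12(f+\bar f)$ is a real-valued feasible solution (the functions of positive type form a convex cone). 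Furthermore $\sum_{\gamma\in\Gamma}f(\gamma)$ is automatically real: since $f$ of positive type satisfies $f(\gamma^{-1})=\overline{f(\gamma)}$, we get $\sum_\gamma f(\gamma)=\sum_\gamma f(\gamma^{-1})=\overline{\sum_\gamma f(\gamma)}$, so $\operatorname{Re}f$ has the same objective value as $f$. Hence \eqref{pt} equals the corresponding maximum taken over complex-valued $f$.

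Next I would set up the correspondence $f\leftrightarrow(A_\pi)_{\pi\in\hatGamma}$, $A_\pi:=\hat f(\pi)$. Given any tuple of complex $d_\pi\times d_\pi$ matrices $(A_\pi)$, the function $f(\gamma):=\tfrac{1}{|\Gamma|}\sum_{\pi\in\hatGamma}d_\pi\langle A_\pi,\pi(\gamma)\rangle$ has $\hat f(\pi)=A_\pi$ for all $\pi$ --- this is exactly the computation used in the proof of Theorem~\ref{thm:bochner}, applied to each block --- so the correspondence is a bijection. Under it: $f$ is of positive type iff every $A_\pi$ is positive semidefinite, by Theorem~\ref{thm:bochner}; the objective equals $\sum_{\gamma\in\Gamma}f(\gamma)=\hat f(\id)=A_\id$, since the trivial representation sends every group element to $1$; and by the Fourier inversion formula evaluated at $e$ and at $x\in X$,
\[
f(e)=\frac{1}{|\Gamma|}\sum_{\pi\in\hatGamma}d_\pi\,\Tr(A_\pi),
\qquad
f(x)=\frac{1}{|\Gamma|}\sum_{\pi\in\hatGamma}d_\pi\,\langle A_\pi,\pi(x)\rangle,
\]
so the constraints $f(e)=1$ and $f(x)=0$ become $\sum_{\pi}d_\pi\Tr(A_\pi)=|\Gamma|$ and $\sum_{\pi}d_\pi\langle A_\pi,\pi(x)\rangle=0$, respectively. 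Combining these equivalences with the first paragraph identifies $\vartheta(G)$ with \eqref{compFormulation}.

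I do not expect a real obstacle: the substance is entirely supplied by Bochner's theorem and Fourier inversion, both already in hand. The only points needing a little care are the bookkeeping around the real-valued constraint (checking that $f\mapsto\operatorname{Re}f$ preserves feasibility and objective value) and the convention that $A_\id$ is the scalar entry of the $1\times1$ block for the trivial representation --- which is automatically a nonnegative real number because $A_\id\succeq0$, matching the fact that the objective of \eqref{pt} is real. One could instead keep $f$ real-valued throughout and carry the resulting symmetry of the Fourier coefficients along, but since \eqref{compFormulation} imposes no such symmetry, relaxing to complex-valued $f$ is the cleaner route.
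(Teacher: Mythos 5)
Your proof is correct and follows essentially the same route as the paper: identify $f$ with its Fourier coefficients $A_\pi=\hat f(\pi)$, use Bochner's theorem to convert positive type into positive semidefiniteness, and use Fourier inversion to translate the constraints at $e$ and at $x\in X$. The only cosmetic difference is in how real-valuedness is recovered: the paper symmetrizes via $f(\gamma)=\tfrac12(g(\gamma)+g(\gamma^{-1}))$ and invokes the inverse-closedness of $X$, whereas you average $f$ with $\bar f$; since positive-type functions satisfy $f(\gamma^{-1})=\overline{f(\gamma)}$, these amount to the same operation.
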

\begin{proof}
  If $f \colon \Gamma \to \R$ is any feasible solution to \eqref{pt},
  set $A_\pi = \hat{f}(\pi)$ for each $\pi \in \hat \Gamma$.  By
  Theorem \ref{thm:bochner}, the matrices $A_\pi$ are positive
  semidefinite.  Moreover, one easily checks using the Fourier
  inversion formula that the other constraints of
  \eqref{compFormulation} are satisfied by $\{ A_\pi : \pi \in
  \hatGamma\}$, and that the objective values are equal: $ A_1 =
  \sum_{\gamma \in \Gamma} f(\gamma).  $

  For the other direction, let $\{ A_\pi : \pi \in \hatGamma\}$ be a
  feasible solution for \eqref{compFormulation} and define $g \colon
  \Gamma \to \C$ by
\[
g(\gamma) = \frac{1}{|\Gamma|} \sum_{\pi \in \hatGamma} d_\pi \langle A_\pi, \pi(\gamma) \rangle \quad \text{for all} \quad \gamma \in \Gamma.
\]
Then $g$ is of positive type by Theorem \ref{thm:bochner}.  Now define
$f(\gamma) = \frac{1}{2}(g(\gamma) + g(\gamma^{-1}))$ for all $\gamma
\in \Gamma$.  Then $f$ is real-valued, and that $f$ satisfies all the
other constraints of \eqref{pt} is easily checked using the fact that
$X$ is inverse-closed. Moreover
\[
\sum_{\gamma \in \Gamma} f(\gamma)
= \frac{1}{|\Gamma|} \sum_{\gamma \in \Gamma}
\sum_{\pi \in \hatGamma} d_\pi \langle A_\pi, \pi(\gamma)\rangle
= A_\id
\]
by the Schur orthogonality relations.
\end{proof}

When $\Gamma$ is an Abelian group, then all its irreducible
representation are one-dimensional. Therefore, the semidefinite
program~\eqref{compFormulation} is just a linear program. More
generally, \eqref{compFormulation} is equivalent to a linear program
whenever the connection set of the Cayley graph $\Cay(\Gamma,X)$ is
closed under conjugation; that is, $\gamma x \gamma^{-1} \in X$ for
all $x \in X$ and $\gamma \in \Gamma$. This is the content of the next
theorem.

\begin{theorem}\label{lp}
  Let $G$ be the Cayley graph $\Cay(\Gamma, X)$ and suppose that the
  connection set $X$ is closed under conjugation. Then
\begin{align}
\tag{D}\label{lpFormulation}
\vartheta(G) = \max \Big\{ a_\id : \; & a_\pi \geq 0 \text{
  for each } \pi \in \hatGamma, \\[-0.4ex]
&  \sum_{\pi \in \hatGamma} d_\pi^2 a_\pi = |\Gamma|, \;
\sum _{\pi \in \hatGamma} d_\pi a_\pi \chi_\pi(x) = 0 \text{ for } x \in X \Big\}.\nonumber
\end{align}
\end{theorem}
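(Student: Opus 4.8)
The plan is to read off \eqref{lpFormulation} from the block-diagonalized semidefinite program \eqref{compFormulation} of Theorem~\ref{comp} via a symmetrization (``twirling'') argument over the conjugation action of $\Gamma$ on itself; the hypothesis that $X$ is closed under conjugation is precisely what makes this symmetrization preserve the constraints. Under the correspondence, the scalar variable $a_\pi$ of \eqref{lpFormulation} will be the common eigenvalue of the matrix $A_\pi = a_\pi I_{d_\pi}$ appearing in \eqref{compFormulation}, which is automatically nonnegative.

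First I would prove $\vartheta(G) \ge a_\id$ for every feasible solution $a$ of \eqref{lpFormulation}. Given such an $a$, set $A_\pi = a_\pi I_{d_\pi}$ for each $\pi \in \hatGamma$. Each $A_\pi$ is positive semidefinite; the normalization constraint becomes $\sum_{\pi} d_\pi \Tr(A_\pi) = \sum_{\pi} d_\pi^2 a_\pi = |\Gamma|$; and for $x \in X$, since $\langle A_\pi, \pi(x) \rangle = a_\pi \overline{\chi_\pi(x)}$ and the $a_\pi$ are real, $\sum_{\pi} d_\pi \langle A_\pi, \pi(x) \rangle = \overline{\sum_{\pi} d_\pi a_\pi \chi_\pi(x)} = 0$. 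The objective value is $A_\id = a_\id$, so $\{A_\pi\}$ is feasible for \eqref{compFormulation}, and Theorem~\ref{comp} gives $\vartheta(G) \ge a_\id$.

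For the reverse inequality I would start from an arbitrary feasible solution $\{A_\pi : \pi \in \hatGamma\}$ of \eqref{compFormulation} and form the twirl
\[
\tilde A_\pi \;=\; \frac{1}{|\Gamma|} \sum_{\gamma \in \Gamma} \pi(\gamma)\, A_\pi\, \pi(\gamma)^*.
\]
Being an average of unitary conjugates of the positive semidefinite matrix $A_\pi$, each $\tilde A_\pi$ is positive semidefinite, and a short reindexing ($\gamma \mapsto \beta^{-1}\gamma$) shows $\pi(\beta) \tilde A_\pi = \tilde A_\pi \pi(\beta)$ for all $\beta \in \Gamma$; Schur's lemma then forces $\tilde A_\pi = a_\pi I_{d_\pi}$ for a scalar $a_\pi \ge 0$. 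Since $\Tr(\tilde A_\pi) = \Tr(A_\pi)$, the normalization $\sum_{\pi} d_\pi^2 a_\pi = |\Gamma|$ persists. The crucial point is the $x$-constraint: pushing the conjugations inside the trace inner product gives, for every $x \in X$,
\[
\sum_{\pi \in \hatGamma} d_\pi \langle \tilde A_\pi, \pi(x) \rangle \;=\; \frac{1}{|\Gamma|} \sum_{\gamma \in \Gamma} \sum_{\pi \in \hatGamma} d_\pi \langle A_\pi, \pi(\gamma^{-1} x \gamma) \rangle,
\]
and since $X$ is conjugation-closed, $\gamma^{-1} x \gamma \in X$, so every inner sum vanishes by feasibility of $\{A_\pi\}$; hence $\sum_{\pi} d_\pi a_\pi \overline{\chi_\pi(x)} = 0$, and conjugating (using $a_\pi \in \R$) gives $\sum_{\pi} d_\pi a_\pi \chi_\pi(x) = 0$. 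Finally the trivial representation has degree $1$, so the twirl leaves it untouched: $a_\id = \tilde A_\id = A_\id$. Thus $\{a_\pi\}$ is feasible for \eqref{lpFormulation} with objective value $A_\id$; taking the supremum over all feasible $\{A_\pi\}$ and invoking Theorem~\ref{comp} shows the optimum of \eqref{lpFormulation} is at least $\vartheta(G)$, which with the previous paragraph proves the theorem.

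I expect the only genuinely delicate step to be the twirling identity for the $x$-constraints, together with the appeal to Schur's lemma collapsing $\tilde A_\pi$ to a scalar; the remaining verifications are routine, and the hypothesis that $X$ is closed under conjugation enters exactly once, to guarantee $\gamma^{-1}x\gamma \in X$. Equivalently, one can run the entire argument in the ``time domain'' of Theorem~\ref{ptFunctions}: averaging a feasible $f$ of \eqref{pt} over conjugation produces a class function that is still of positive type (by Bochner's theorem, since $\widehat{\tilde f}(\pi)$ is an average of unitary conjugates of $\hat f(\pi)$) and whose Fourier transforms $\hat f(\pi)$ are scalar by Schur --- the same computation from a different angle.
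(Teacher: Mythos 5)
Your proposal is correct and follows essentially the same route as the paper: twirl a feasible solution of \eqref{compFormulation} over the conjugation action $A_\pi \mapsto \frac{1}{|\Gamma|}\sum_\gamma \pi(\gamma)A_\pi\pi(\gamma)^*$, use closure of $X$ under conjugation (the paper writes $x\gamma = \gamma y$ with $y\in X$ where you write $\gamma^{-1}x\gamma\in X$) to preserve the edge constraints, invoke Schur's lemma to collapse to $a_\pi I_{d_\pi}$, and reverse the construction by setting $A_\pi = a_\pi I_{d_\pi}$. Your explicit handling of the conjugate $\overline{\chi_\pi(x)}$ versus $\chi_\pi(x)$ is in fact slightly more careful than the paper's.
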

\begin{proof}
  We prove the equivalence of \eqref{compFormulation} and
  \eqref{lpFormulation}.  Let $\{ A_\pi : \pi \in \hatGamma \}$ be a
  feasible solution for \eqref{compFormulation}, and for each $\pi$
  let
\[
\bar A_\pi = \frac{1}{| \Gamma |} \sum_{\gamma \in \Gamma} \pi(\gamma) A_\pi \pi(\gamma)^*.
\]
Then $\{ \bar A_\pi : \pi \in \hatGamma \}$ is again a solution to \eqref{compFormulation}: If $x \in X$, then
\begin{align*}
\sum_{\pi \in \hatGamma} d_\pi \langle \bar A_\pi, \pi(x) \rangle 
&= \frac{1}{|\Gamma|} \sum_{\pi \in \hatGamma} d_\pi \sum_{\gamma \in \Gamma} \langle \pi(\gamma) A_\pi \pi(\gamma)^*, \pi(x) \rangle \\
&= \frac{1}{|\Gamma|} \sum_{\pi \in \hatGamma} d_\pi \sum_{\gamma \in
  \Gamma} \langle \pi(\gamma) A_\pi, \pi(x \gamma) \rangle.
\end{align*}
Since $X$ is closed under conjugation there is a $y \in X$ so that
$x\gamma = \gamma y$ holds.  Hence, the sum above equals
\[
\frac{1}{|\Gamma|} \sum_{\pi \in \hatGamma} d_\pi \sum_{\gamma \in \Gamma} \langle \pi(\gamma) A_\pi, \pi(\gamma y) \rangle 
= \frac{1}{|\Gamma|} \sum_{\pi \in \hatGamma} d_\pi \sum_{\gamma \in \Gamma} \langle A_\pi, \pi(y) \rangle 
= \sum_{\pi \in \hatGamma} d_\pi \langle A_\pi, \pi(y) \rangle = 0.
\]
Moreover, since $\pi(\gamma) A_{\pi} \pi(\gamma)^*$ is similar to
$A_\pi$ for each $\gamma \in \Gamma$, the matrix $\bar A_\pi$ is
positive semidefinite for each $\pi \in \hatGamma$ and $\sum_{\pi \in
  \hatGamma} d_\pi \Tr(\bar A_\pi) = |\Gamma|$.

We have constructed $\bar A_\pi$ so that $\bar A_\pi \pi(\gamma) =
\pi(\gamma) \bar A_\pi$ for all $\gamma \in \Gamma$. Schur's lemma
then implies that $\bar A_\pi$ is equal to $a_\pi I_{d_\pi}$ for some
scalar $a_\pi$ and since $\bar A_\pi$ is positive semidefinite this
scalar is nonnegative. We have $d_\pi a_\pi = \Tr(\bar A_\pi)$ as well
as
\[
\langle \bar A_\pi, \pi(\gamma) \rangle = a_{\pi} \chi_\pi(\gamma) \quad \text{for all} \quad \gamma \in \Gamma,
\]
so $\{ a_\pi : \pi \in \hatGamma \}$ is a feasible solution to \eqref{lpFormulation}
having objective value $a_\id = A_\id$.

For the other direction, we take a feasible solution $\{ a_\pi : \pi \in \hatGamma \}$
to \eqref{lpFormulation}, and for each $\pi \in
\hatGamma$, we set $A_\pi = a_\pi I_{d_\pi}$. This is a
feasible solution to \eqref{compFormulation} with objective value
$A_\id = a_\id$.
\end{proof}

Denote the constraint $\sum _{\pi \in \hatGamma} d_\pi a_\pi
\chi_\pi(x) = 0$ by $C_x$ ($x \in X$). For computational purposes, the
following simplifications can be applied to \eqref{lpFormulation}:
First, only one of the constraints $\{ C_x, C_{x^{-1}} \}$ is needed.
Second, since the characters $\chi_\pi$ are constant on conjugacy
classes, it suffices to keep only the constraints $C_x$, with one $x$
per conjugacy class.

\section{First application: $k$-intersecting permutations}
\label{sec:efp}

In this section we apply Theorem~\ref{lp} to the problem of
$k$-intersecting permutations as discussed in the introduction.

Let $S_n$ be the symmetric group on $n$ letters. A family $\mathcal A
\subseteq S_n$ is said to be \emph{$k$-intersecting} if any two
permutations in $\mathcal A$ agree on at least $k$ elements. That is,
a $k$-intersecting family of $S_n$ is an independent set in the graph
$\Cay(S_n, X_{n,k})$, where
\[
X_{n,k} = \{\sigma
\in S_n : \sigma \text{ has strictly less than $k$ fixed points}\}.
\]
The set $X_{n,k}$ is closed under conjugation so Theorem~\ref{lp}
applies. One can interpret the method of Ellis, Friedgut, and Pilpel
in~\cite{ellis11} as constructing an explicit family of feasible
solutions to the linear programs which turns out to be optimal for
given $k$ and $n$ sufficiently large.

Conjecture 2 of \cite{ellis11} implies that a largest $k$-intersecting
family in $S_n$ has size
\[
\max_{0 \leq i \leq (n-k)/2} \big|\{\sigma \in S_n \colon \sigma \text{ has at least $k+i$ fixed points in } \{1, \ldots, k+2i\}\}\big|,
\]
which in particular means that the maximum size is $(n-k)!$ for $n
\geq 2k+1$.  We solved the linear program \eqref{lpFormulation} for
small values of $n$ and $k$ with the help of a computer. In
Table~\ref{comp1} the $(n, k)$-th entry is marked when the
$\vartheta$-number gives the conjectured maximum. To evaluate the
characters of the symmetric group we used \texttt{gap}~\cite{gap} and
to solve the linear programs we used \texttt{lrs}~\cite{lrs}. Since
both software packages only use rational arithmetic our computations
are rigorous.

\begin{table}[H]
\centering
\scriptsize
\begin{tabular}{cccccccccccccccc}
\toprule
\diagbox{k}{n} &$1$&$2$&$3$&$4$&$5$&$6$&$7$&$8$&$9$&$10$&$11$&$12$&$13$&$14$&$15$\\
\midrule
$1$ & $\checkmark$ & $\checkmark$ & $\checkmark$ & $\checkmark$ & $\checkmark$ & $\checkmark$ & $\checkmark$ & $\checkmark$ & $\checkmark$ & $\checkmark$ & $\checkmark$ & $\checkmark$ & $\checkmark$ & $\checkmark$ & $\checkmark$\\
\midrule
$2$ &  & $\checkmark$ & $\checkmark$ & $\checkmark$ & $\checkmark$ & $\checkmark$ & $\checkmark$ & $\checkmark$ & $\checkmark$ & $\checkmark$ & $\checkmark$ & $\checkmark$ & $\checkmark$ & $\checkmark$ & $\checkmark$\\
\midrule
$3$ &  &  & $\checkmark$ & $\checkmark$ & $\checkmark$ & $\checkmark$ &  & $\checkmark$ & $\checkmark$ & $\checkmark$ & $\checkmark$ & $\checkmark$ & $\checkmark$ & $\checkmark$ & $\checkmark$\\
\midrule
$4$ &  &  &  & $\checkmark$ & $\checkmark$ & $\checkmark$ &  &  &  &  & $\checkmark$ & $\checkmark$ & $\checkmark$ & $\checkmark$ & $\checkmark$\\
\midrule
$5$ &  &  &  &  & $\checkmark$ & $\checkmark$ & $\checkmark$ &  &  &  &  & $\checkmark$ & $\checkmark$ & $\checkmark$ & $\checkmark$\\
\midrule
$6$ &  &  &  &  &  & $\checkmark$ & $\checkmark$ & $\checkmark$ &  &  &  &  &  &  & $\checkmark$\\
\midrule
$7$ &  &  &  &  &  &  & $\checkmark$ & $\checkmark$ & $\checkmark$ &  & $\checkmark$ &  &  &  & \\
\midrule
$8$ &  &  &  &  &  &  &  & $\checkmark$ & $\checkmark$ & $\checkmark$ &  &  &  &  & \\
\midrule
$9$ &  &  &  &  &  &  &  &  & $\checkmark$ & $\checkmark$ & $\checkmark$ & $\checkmark$ &  &  & \\
\midrule
$10$ &  &  &  &  &  &  &  &  &  & $\checkmark$ & $\checkmark$ & $\checkmark$ &  &  & \\
\midrule
$11$ &  &  &  &  &  &  &  &  &  &  & $\checkmark$ & $\checkmark$ & $\checkmark$ &  & \\
\midrule
$12$ &  &  &  &  &  &  &  &  &  &  &  & $\checkmark$ & $\checkmark$ & $\checkmark$ & \\
\midrule
$13$ &  &  &  &  &  &  &  &  &  &  &  &  & $\checkmark$ & $\checkmark$ & $\checkmark$\\
\midrule
$14$ &  &  &  &  &  &  &  &  &  &  &  &  &  & $\checkmark$ & $\checkmark$\\
\midrule
$15$ &  &  &  &  &  &  &  &  &  &  &  &  &  &  & $\checkmark$\\
\bottomrule
\end{tabular}
\vspace{2ex}
\caption{\label{comp1}Computation of $\vartheta(\Cay(S_n, X_{n,k}))$}
\vspace{-5ex}
\end{table}

\section{Second application: $k$-intersecting invertible matrices}
\label{sec:qanalog}

Here we consider a $q$-analog of the previous application. Let $\Gamma
= \mathrm{GL}(n, \mathbb{F}_q)$ be the group of invertible $n \times
n$-matrices over the finite field with $q$ elements, where $q$ is a
prime power. We say that two matrices $A$ and $B$ in $\mathrm{GL}(n,
\mathbb{F}_q)$ \emph{$k$-intersect} if there is a $k$-dimensional
subspace $H$ of $\mathbb{F}_q^n$ such that $Ax = Bx$ for all $x \in
H$. Given a natural number $k$, let
\[
X_{q,n,k} = \{ A \in \mathrm{GL}(n, \mathbb{F}_q) : \mathrm{rank}(A - I) > n-k \}
\]
and consider the Cayley graph $G_{q,n,k} = \Cay(\Gamma, X_{q,n,k})$.
Independent sets in this graph correspond to $k$-intersecting families
of invertible matrices.

The independence number of $G_{q,n,1}$ was recently calculated by Guo
and Wang in \cite{GuoWang2011} (not by computing $\vartheta(G_{q,n,1})$). 

For any $q$ and $n$, one clearly
obtains a lower bound by choosing a nonzero vector $x \in
\mathbb{F}_q^n$ and considering the set $\mathcal{A}$ of all matrices $A \in
\mathrm{GL}(n, \mathbb{F}_q)$ such that $Ax = x$. One has $|\mathcal{A}| =
\prod_{i=1}^{n-1} (q^n-q^i)$ by the orbit-stabilizer theorem, and for
small values of $n$ and $q$ we found numerically that
$\vartheta(G_{q,n,1})$ equals this lower bound.  Since $X_{q,n,k}$ is
closed under conjugation, $\vartheta(G_{q,n,k})$ can be
computed by solving the linear program \eqref{lpFormulation}.

\begin{conjecture}
  One has $\vartheta(G_{q,n,1}) = \alpha(G_{q,n,1}) =
  \prod_{i=1}^{n-1} (q^n-q^i)$ for all values of $n$ and $q$.
\end{conjecture}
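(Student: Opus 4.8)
Write $\Gamma = \mathrm{GL}(n,\mathbb{F}_q)$ and $G = G_{q,n,1}$, and set $N = \prod_{i=1}^{n-1}(q^n-q^i) = |\Gamma|/(q^n-1)$. The plan is to pin down $\vartheta(G)$ by sandwiching it between $N$ and itself. The lower bound $\vartheta(G)\ge N$ is already available: the stabilizer $\mathcal{A} = \{A\in\Gamma : Ax_0 = x_0\}$ of a fixed nonzero vector $x_0\in\mathbb{F}_q^n$ is an independent set in $G$, since any two of its members agree on $x_0$; by the orbit--stabilizer theorem $|\mathcal{A}| = |\Gamma|/(q^n-1) = N$, so $N \le \alpha(G) \le \vartheta(G)$. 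Hence everything reduces to the upper bound $\vartheta(G)\le N$.

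For the upper bound I would pass to the complement. Since $G$ is a Cayley graph, hence vertex-transitive, Lov\'asz's identity \cite{lovasz79} gives $\vartheta(G)\,\vartheta(\overline{G}) = |\Gamma|$, and because $\vartheta(\overline{G})\ge\alpha(\overline{G}) = \omega(G)$ it suffices to exhibit a clique of size $q^n-1$ in $G$. First I would unwind the adjacency relation: $A$ and $B$ are adjacent in $G$ precisely when $B^{-1}A\in X_{q,n,1}$, i.e.\ when $B^{-1}A-I$ --- equivalently $A-B$ --- is invertible. So a clique in $G$ is exactly a subset of $\Gamma$ all of whose pairwise differences are invertible.

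Such a clique comes from the field $\mathbb{F}_{q^n}$: fix an $\mathbb{F}_q$-basis of $\mathbb{F}_{q^n}$ and let $m_a\in M_n(\mathbb{F}_q)$ be the matrix of the $\mathbb{F}_q$-linear multiplication map $x\mapsto ax$ on $\mathbb{F}_{q^n}$. Then $a\mapsto m_a$ is additive, $m_a$ is invertible if and only if $a\ne 0$, and $m_a-m_b = m_{a-b}$; hence $\{m_a : a\in\mathbb{F}_{q^n}^\times\}$ is a set of $q^n-1$ invertible matrices with pairwise invertible differences, i.e.\ a clique in $G$. Therefore $\omega(G)\ge q^n-1$, so $\vartheta(\overline{G})\ge q^n-1$ and $\vartheta(G) = |\Gamma|/\vartheta(\overline{G})\le |\Gamma|/(q^n-1) = N$. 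Combining with the first paragraph gives $\vartheta(G) = N$, and then $N\le\alpha(G)\le\vartheta(G) = N$ forces $\alpha(G) = N$ as well, recovering the value found by Guo and Wang \cite{GuoWang2011}; as a byproduct $\omega(G) = q^n-1$.

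I do not expect a genuine obstacle along this route: the only inputs beyond the present paper are Lov\'asz's vertex-transitive product identity and the elementary embedding $\mathbb{F}_{q^n}\hookrightarrow M_n(\mathbb{F}_q)$, and the edge cases (e.g.\ $n=1$, where $G = K_{q-1}$ and $N=1$) specialize correctly. The place where real difficulty would appear is in the alternative strategy of arguing entirely within the Fourier/LP framework of Section~\ref{sec:cayley}, namely by exhibiting an explicit feasible solution of value $N$ to the dual of \eqref{lpFormulation}: that would require identifying $\min_{\pi\in\hatGamma}\frac{1}{d_\pi}\sum_{x\in X_{q,n,1}}\chi_\pi(x)$ together with the irreducible representation attaining it, which would draw on Green's character theory of $\mathrm{GL}(n,\mathbb{F}_q)$ --- considerably more involved, and, on the evidence above, avoidable.
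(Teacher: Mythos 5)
The paper does not prove this statement at all: it is left as a conjecture, supported only by numerical solution of the linear program \eqref{lpFormulation} for small $n$ and $q$ (and by the Guo--Wang computation of $\alpha(G_{q,n,1})$ by other means). Your argument, by contrast, is a complete and correct proof, and it settles the conjecture. The two inputs check out. First, adjacency in $G_{q,n,1}$ is indeed equivalent to invertibility of $A-B$ (since $B^{-1}A-I=B^{-1}(A-B)$), so the image of $\mathbb{F}_{q^n}^\times$ under the regular representation $a\mapsto m_a$ is a clique of size $q^n-1$ (this is the classical sharply transitive ``Singer'' set), while the stabilizer of a nonzero vector is a coclique of size $N=|\Gamma|/(q^n-1)$. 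Second, Lov\'asz's product formula $\vartheta(G)\vartheta(\overline G)=|V|$ for vertex-transitive graphs, combined with $\vartheta(\overline G)\ge\alpha(\overline G)=\omega(G)\ge q^n-1$, gives $\vartheta(G)\le N$, which meets the lower bound $\vartheta(G)\ge\alpha(G)\ge N$. This is exactly the clique--coclique mechanism, and it sidesteps entirely the character theory of $\mathrm{GL}(n,\mathbb{F}_q)$ that the paper's LP formulation would require; what it buys is a rigorous, computer-free proof of the $k=1$ case for all $n$ and $q$, plus the byproducts $\omega(G_{q,n,1})=q^n-1$ and an independent derivation of the Guo--Wang value of $\alpha$. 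The one thing your route does not give is any leverage on the $k\ge 2$ conjecture, where no clique of size $|\Gamma|/\prod_{i=k}^{n-1}(q^n-q^i)$ is available and the LP/character-theoretic approach the authors gesture at seems genuinely needed. A minor presentational point: you should state explicitly that the product formula is Theorem 8 of \cite{lovasz79} and that it applies because Cayley graphs are vertex-transitive; otherwise the write-up is complete as it stands.
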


For $k > 1$, we can construct independent sets in a similar way as
above: Choose linearly independent vectors $x_1, \dots, x_k \in
\mathbb{F}_q^n$ and let $\mathcal{A}$ be the set of all matrices $A \in
\mathrm{GL}(n, \mathbb{F}_q)$ such that $Ax_i = x_i$ for $1 \leq i
\leq k$.  Then $|\mathcal{A}| = \prod_{i=k}^{n-1} (q^n - q^i)$.  By computing
the $\vartheta$-number for small values of $n$ and $q$ (see Table~\ref{comp2}) we
have evidence that a version of the Erd\H os-Ko-Rado theorem might
also be true in this setting.

\begin{conjecture}
  We conjecture that for each $q, k \in \N$, there exists $n_0 =
  n_0(q,k) \in \N$ such that $\vartheta(G_{q,n,k}) = \alpha(G_{q,n,k})
  = \prod_{i=k}^{n-1} (q^n - q^i)$ for all $n \geq
  n_0$. 
\end{conjecture}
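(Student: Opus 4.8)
The plan is to combine the explicit lower bound recalled above with a matching upper bound on the $\vartheta$-number. Since $\alpha(G)\le\vartheta(G)$ for every graph and the construction above already gives $\alpha(G_{q,n,k})\ge\prod_{i=k}^{n-1}(q^n-q^i)$, it suffices to prove $\vartheta(G_{q,n,k})\le\prod_{i=k}^{n-1}(q^n-q^i)$ once $n\ge n_0(q,k)$. Because $X_{q,n,k}$ is closed under conjugation, Theorem~\ref{lp} applies and the linear program \eqref{lpFormulation} computes $\vartheta$; by linear programming (weak) duality it therefore suffices to exhibit a single dual-feasible point. Concretely one wants a real number $z_0$ and multipliers $z_C$, one per conjugacy class $C\subseteq X_{q,n,k}$ (with representative $x_C\in C$), satisfying $z_0+\sum_C z_C\ge 1$ and $d_\pi z_0+\sum_C\chi_\pi(x_C)z_C\ge 0$ for every nontrivial $\pi\in\hatGamma$, and with $|\Gamma|\,z_0=\prod_{i=k}^{n-1}(q^n-q^i)$. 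This weighting of conjugacy classes is the $q$-analog of the one Ellis, Friedgut, and Pilpel construct in~\cite{ellis11}, and their argument is the model to imitate.

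The first ingredient is the representation theory of $\mathrm{GL}(n,\mathbb{F}_q)$: Green's classification of its irreducible characters by parabolic induction from cuspidal characters of Levi subgroups, together with the Deligne--Lusztig formulas for their values. Inside $\hatGamma$ one would single out the ``relevant'' set $R_k$ of representations occurring in the permutation module $\C[\mathrm{GL}(n,\mathbb{F}_q)/Q_k]$, where $Q_k$ is the pointwise stabilizer of a fixed list of $k$ linearly independent vectors; this is the analog of the Specht modules $S^\lambda$ with $\lambda_1\ge n-k$ appearing in the $S_n$ module on ordered $k$-tuples. One designs the candidate dual solution so that the primal optimum it certifies is attained by coefficients $a_\pi$ supported on $R_k$, and so that the equality constraints $\sum_\pi d_\pi^2 a_\pi=|\Gamma|$ and $\sum_\pi d_\pi a_\pi\chi_\pi(x)=0$ for $x\in X_{q,n,k}$ pin the value to $|\Gamma|/[\Gamma:Q_k]=\prod_{i=k}^{n-1}(q^n-q^i)$.

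The analytic heart is a family of character-ratio estimates: for $A\in X_{q,n,k}$, that is $\mathrm{rank}(A-I)>n-k$, equivalently $A$ fixing no $k$-dimensional subspace pointwise, and for $\pi\notin R_k$, one needs $|\chi_\pi(A)|/d_\pi$ to be small — quantitatively small enough, as $n\to\infty$ with $q,k$ fixed, to dominate $\sum_{\pi\notin R_k}d_\pi^2$. These are the $\mathrm{GL}(n,\mathbb{F}_q)$-counterparts of the bounds on $|\chi_\lambda(\sigma)|/\chi_\lambda(e)$ for permutations $\sigma$ with few fixed points used in~\cite{ellis11}; one would attack them via the Deligne--Lusztig character formula and control of Green functions, or via Hall--Littlewood and Gelfand--Graev combinatorics. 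In parallel one must check that the weights produced are admissible, that is, in the primal picture, that the $a_\pi$ are genuinely nonnegative; this should reduce, as in the symmetric-group case, to a $q$-deformation of a single combinatorial positivity inequality.

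The step I expect to be hardest is exactly this combination — the uniform character-ratio bounds together with the nonnegativity of the weights — entangled with the problem of making $n_0(q,k)$ explicit. The naive spectral (Hoffman-type) bound is tight here only for $k=1$; for $k\ge 2$ one is forced into the weighted refinement, and proving that the refinement stays dual-feasible for all $n\ge n_0$, rather than merely for the finitely many pairs in Table~\ref{comp2}, is the crux, just as the dependence of $n$ on $k$ was the point left open in~\cite{ellis11}. A further difficulty special to the $q$-setting is that the representations in $R_k$ are built from cuspidal constituents of Levi factors rather than from a transparent ``long first row'' condition, so even writing down the correct analog of the EFP weighting, and confirming that it reproduces the exact number $\prod_{i=k}^{n-1}(q^n-q^i)$, requires care with Green's parametrization.
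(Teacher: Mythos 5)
The statement you are addressing is a \emph{conjecture}: the paper offers no proof of it, only non-rigorous floating-point computations for a handful of pairs $(q,n,k)$ recorded in Table~\ref{comp2}. So there is no argument in the paper to compare yours against, and the honest question is whether your proposal constitutes a proof on its own. It does not. What you have written is a correct and sensible reduction (lower bound on $\alpha$ from the explicit stabilizer construction, upper bound on $\vartheta$ via weak LP duality applied to \eqref{lpFormulation}, which together force both equalities since $\alpha\le\vartheta$), followed by a research program whose every substantive step is left open. The three essential ingredients --- (i) identifying the correct set $R_k$ of constituents of $\C[\mathrm{GL}(n,\mathbb{F}_q)/Q_k]$ and the corresponding weighting, (ii) the uniform character-ratio bounds $|\chi_\pi(A)|/d_\pi$ for $A$ with $\mathrm{rank}(A-I)>n-k$ and $\pi\notin R_k$, and (iii) the nonnegativity/dual-feasibility of the resulting weights for all $n\ge n_0$ --- are each stated as goals, not established. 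You acknowledge this yourself, which is to your credit, but it means the proposal is a plan of attack rather than a proof.

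Two further cautions. First, your framing presumes the $q$-analog of the Ellis--Friedgut--Pilpel argument goes through, but the $S_n$ case leans on specific facts about Specht modules and character values at permutations with few fixed points that have no automatic counterpart for $\mathrm{GL}(n,\mathbb{F}_q)$; the paper's authors explicitly note that even the $k=1$ case ($\vartheta(G_{q,n,1})=\prod_{i=1}^{n-1}(q^n-q^i)$ for \emph{all} $n,q$) is posed only as a separate conjecture, with the independence number known from Guo and Wang but not via $\vartheta$. Second, the table in the paper already shows failures of the analogous LP bound for $S_n$ at various $(n,k)$ with $n$ not large relative to $k$, so any proof must genuinely exploit $n\ge n_0(q,k)$; nothing in your outline yet indicates where that threshold would enter quantitatively. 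As it stands, the statement remains a conjecture, and your text should be presented as a proposed strategy, not a proof.
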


The computations in Table~\ref{comp2} have been performed with
\texttt{magma}~\cite{magma} and \texttt{lpsolve}~\cite{lpsolve}. As
the computation of the characters of $\mathrm{GL}(n, \mathbb{F}_q)$
involve irrational numbers we cannot solve the linear programs with
rational arithmetic only. So these computations cannot be considered as
rigorous mathematical proofs. Nevertheless we are certain that
we placed checkmarks where the exact computation of $\vartheta(G_{q,n,k})$
would give an upper bound which is equal to the corresponding lower bound.

\begin{table}[H]
\centering
\scriptsize
\begin{tabular}{cccccccccccccc}
\toprule
& \multicolumn{6}{c}{$q = 2$} & \multicolumn{4}{c}{$q = 3$} & \multicolumn{3}{c}{$q = 4$}\\ 
\cmidrule(lr){2-7} \cmidrule(lr){8-11} \cmidrule(lr){12-14}
\diagbox{k}{n} & $1$ & $2$ & $3$ & $4$ & $5$ & $6$ & $1$ & $2$ & $3$ & $4$ & $1$ & $2$ & $3$\\
\midrule
$1$ & $\checkmark$ & $\checkmark$ & $\checkmark$ & $\checkmark$ & $\checkmark$ & $\checkmark$ &  $\checkmark$ & $\checkmark$ & $\checkmark$ & $\checkmark$ & $\checkmark$ & $\checkmark$ & $\checkmark$\\ 
\midrule
$2$ & & $\checkmark$ & $\checkmark$ & & $\checkmark$ & $\checkmark$ & & $\checkmark$ & $\checkmark$ & $\checkmark$ & & $\checkmark$ & $\checkmark$\\
\midrule
$3$ & & & $\checkmark$ & $\checkmark$ & & $\checkmark$ & & & $\checkmark$ & $\checkmark$ & & & $\checkmark$\\
\midrule
$4$ & & & & $\checkmark$ & $\checkmark$ & & & & & $\checkmark$ & & &\\
\midrule
$5$ & & & & & $\checkmark$ & $\checkmark$  & & & & & & &\\
\midrule
$6$ & & & & & & $\checkmark$ &  & & & & & &\\
\bottomrule
\end{tabular}
\vspace{2ex}
\caption{\label{comp2}Computation of $\vartheta(\Cay(\Gamma, X_{q,n,k}))$}
\vspace{-3ex}
\end{table}

\section{Blowing up vertex transitive graphs}
\label{sec:blowup}

The final theorem in this note shows that for the purposes of
estimating the independence number of a graph, the theory presented in
the preceding sections can be applied not just to Cayley graphs, but
also to vertex-transitive graphs.

\begin{theorem}
\label{thm:blowup}
  Let $G = (V,E)$ be a graph and let $\Gamma$ be a group of
  automorphisms of $G$. Suppose $\Gamma$ acts transitively on
  $V$. Then there exists a connection set $X \subseteq \Gamma$ such that
 \[
\alpha(G) = 
\tfrac{|V|}{|\Gamma|} \alpha(\Cay(\Gamma,X)).
\]
\end{theorem}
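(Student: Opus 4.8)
The plan is to realize $\Cay(\Gamma,X)$ as a \emph{blow-up} of $G$ in which every vertex of $G$ is replaced by a copy of the point stabilizer. Fix a vertex $v_0\in V$ and let $H=\{\gamma\in\Gamma:\gamma v_0=v_0\}$, so that $|H|=|\Gamma|/|V|$ by transitivity and the orbit--stabilizer theorem. Define
\[
X=\{\gamma\in\Gamma:\{v_0,\gamma v_0\}\in E\}.
\]
First I would check that $X$ is a connection set: $e\notin X$ since $G$ has no loops, and $X$ is inverse-closed because applying the automorphism $\gamma^{-1}$ to an edge $\{v_0,\gamma v_0\}$ yields the edge $\{\gamma^{-1}v_0,v_0\}$. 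One also notes that $X\cap H=\emptyset$ and that $X$ is a union of double cosets $H\gamma H$, but these facts are automatic once the next step is in place.

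The key observation is the equivalence
\[
\text{$\gamma_1$ and $\gamma_2$ are adjacent in $\Cay(\Gamma,X)$}\quad\Longleftrightarrow\quad \{\gamma_1 v_0,\gamma_2 v_0\}\in E,
\]
obtained by unwinding the definitions: adjacency means $\gamma_2^{-1}\gamma_1\in X$, i.e. $\{v_0,\gamma_2^{-1}\gamma_1 v_0\}\in E$, and applying the automorphism $\gamma_2$ turns this into $\{\gamma_2 v_0,\gamma_1 v_0\}\in E$. Phrased differently, the orbit map $\phi\colon\Gamma\to V$, $\phi(\gamma)=\gamma v_0$, is a surjection whose fibers are the left cosets of $H$ (each of size $|H|$), and which satisfies: $\gamma_1$ and $\gamma_2$ are adjacent in $\Cay(\Gamma,X)$ if and only if $\phi(\gamma_1)$ and $\phi(\gamma_2)$ are adjacent in $G$. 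Equivalently, $\Cay(\Gamma,X)$ is the lexicographic product of $G$ with an edgeless graph on $|H|$ vertices, which is the sense in which it is a blow-up of $G$.

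Granting the equivalence, both inequalities follow. For $\alpha(\Cay(\Gamma,X))\ge\frac{|\Gamma|}{|V|}\alpha(G)$: if $J\subseteq V$ is a maximum independent set of $G$, then $\phi^{-1}(J)$ is independent in $\Cay(\Gamma,X)$ — any two of its elements are sent into $J$, hence to equal or non-adjacent vertices of $G$, hence are non-adjacent in the Cayley graph — and $|\phi^{-1}(J)|=|H|\,\alpha(G)$. For the reverse: if $I\subseteq\Gamma$ is a maximum independent set of $\Cay(\Gamma,X)$, then distinct vertices of $\phi(I)$ lift to non-adjacent elements of $I$ and so are themselves non-adjacent, whence $\phi(I)$ is independent in $G$ and
\[
|I|=\sum_{v\in\phi(I)}\big|I\cap\phi^{-1}(v)\big|\le|H|\,|\phi(I)|\le|H|\,\alpha(G).
\]
Combining the two bounds gives $\alpha(\Cay(\Gamma,X))=|H|\,\alpha(G)=\frac{|\Gamma|}{|V|}\alpha(G)$, which rearranges to the assertion.

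The only step that takes any thought is guessing the right connection set $X$ and then verifying the displayed adjacency equivalence together with the coset bookkeeping; once $X$ is chosen correctly, every remaining step is a direct consequence of the definitions and of orbit--stabilizer, so I do not expect a genuine obstacle.
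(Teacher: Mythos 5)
Your proposal is correct and follows exactly the paper's approach: fix a base vertex, take $X=\{\gamma:\{x_0,\gamma x_0\}\in E\}$, verify the adjacency equivalence between $\Cay(\Gamma,X)$ and $G$ under the orbit map, and finish with the orbit--stabilizer theorem. You simply spell out the two inequalities that the paper leaves as ``the theorem follows immediately.''
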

\begin{proof}
 Pick a vertex $x_0 \in V$ and define
 \[
 X = \{ \gamma \in \Gamma : \{x_0, \gamma \cdot x_0\} \in E \}.
\]
Then for $\beta,\gamma \in \Gamma$, one has an edge $\{\beta,
\gamma\}$ in the Cayley graph $\Cay(\Gamma,X)$ if and only if
\[
\gamma^{-1} \beta \in X \iff \{x_0, \gamma^{-1} \beta \cdot x_0\} \in
E \iff \{\gamma \cdot x_0, \beta \cdot x_0\} \in E.
\] 
Now notice that by the orbit-stabilizer theorem, one has
\[
|\{\gamma \in \Gamma : \gamma \cdot x  = x\}| = \frac{|\Gamma|}{|V|}
\quad \text{for all $x \in V$},
\]
and the theorem follows immediately.
\end{proof}

Going from $G$ to the Cayley graph $\Cay(\Gamma,X)$ is accomplished
using the following procedure: First choose a vertex $x_0 \in V$
arbitrarily, and let $H$ be the stabilizer subgroup of $x_0$ in
$\Gamma$. Each vertex $x \in V$ is then replaced with an empty graph
on the left coset of $H$ in $\Gamma$ consisting of all those $\gamma
\in \Gamma$ such that $\gamma \cdot x_0 = x$.  In other words, the
vertex set $V$ is regarded as a $\Gamma$-homogeneous space, and each
vertex is ``blown up'' to an independent set of size $|\Gamma| / |V|$
by replacing it with its inverse image under the projection
map.

\end{document}